\newcommand{\p}{\mathbf p}
\newcommand{\x}{\mathbf x}
\newcommand{\m}{\mu}
\newcommand{\ds}{\displaystyle}
\newcommand{\ii}{\infty}
\newcommand{\e}{\varepsilon}
\newcommand{\s}{\sigma}
\newcommand{\pa}{\partial}
\newtheorem{definition}{Definition}
\newtheorem{theorem}{Theorem}
\newtheorem{corollary}{Corollary}
\newtheorem{remark}{Remark}
\newtheorem{lemma}{Lemma}
\newtheorem{example}{Example}
\begin{document}

\title{Trends to equilibrium for a class of relativistic diffusions}

\author{J\"urgen Angst \\  \small{IRMAR, Universit\'e Rennes 1, Campus de Beaulieu, 35042 Rennes Cedex, France} \\
\small{\url{jurgen.angst@univ-rennes1.fr}}}

\maketitle

\begin{abstract}
A large class $\mathcal C$ of relativistic diffusions with values in the phase-space of special relativity was introduced in \cite{angst} in order to answer some open questions concerning the asymptotic behaviour of two examples of such processes \cite{dr,dunkel1,dunkel2}. 
In particular, the equilibrium measures of these diffusions were explicitly computed, and their hydrodynamic limit was shown to be Brownian. In this paper, we address the question of the trends to equilibrium of the diffusions of the whole class $\mathcal C$. We show the existence of a spectral gap using the method introduced in \cite{bakry} and deduce the exponential decay of the distance to equilibrium  in $\mathbb L^2-$norm and in total variation. A similar result was obtained recently in \cite{simone2} for a particular process of the class $\mathcal C$. \par
\end{abstract}
\bigskip
\noindent 
{\bf Keywords}: Relativisic diffusions, Relativistic Ornstein-Uhlenbeck process, Equilibrium measure, Spectral gap, Lyapounov function. \par
\bigskip
\noindent
{\bf AMS 2010 classification}: 60J60, 83A05, 26D10.

\section{Introduction}
\label{intro}
The study of stochastic processes in the framework of special relativity goes back to the 1960s and the pionneering work of Dudley \cite{dudley1,dudley2}. Since the late 1990s, there has been a renewed interest in the subject  with the work of Debbasch and his co-authors on the Relativistic Ornstein-Uhlenbeck Process \cite{bdr3,bdr1,dmr,dr}, then those of Dunkel and H\"anggi on the so-called ``relativistic Brownian motion'' \cite{dunkel1,dunkel2}. The notion of relativistic diffusion has been extented to the realm of general relativity in \cite{flj,deb} and the literature on the topic is now thriving, both in Mathematics, see for example \cite{flj2,nonexplo,angstannIHP,angstpoisson} and in Physics
 \cite{haba,hermann,chevalier} etc. 
\bigskip

In this article, we address the question of the trends to equilibrium for a large class $\mathcal C$ of relativistic diffusions with values in the phase-space of special relativity, \emph{i.e.} the unitary tangent bundle of  Minkowski space-time. This class of processes was introduced in \cite{angst} in order to answer several open questions concerning the long-time asymptotic behavior of two examples of such diffusions, namely the ones considered in \cite{dmr,dunkel1,dunkel2}. To our knowledge, the class $\mathcal C$ includes most of the Minkowskian diffusions introduced in the physical literature.
\bigskip

We show that under the same mild hypotheses as in \cite{angst}, and for all diffusions of the class $\mathcal C$, the equilibrium measure of the ``momentum subdiffusion'' satisfies a Poincar\'e inequality and we thus deduce that the rate of convergence to equilibrium is exponential both in $\mathbb L^2-$norm and in total variation. A similar result was obtained recently in \cite{simone2} for a particular process of the class $\mathcal C$: the relativistic diffusion process associated to the kinetic relativistic Fokker-Planck equation considered in \cite{simone1,dunkel1,dunkel2,haba}. The method we follow here is the one developped in \cite{bakry,bakry2}, which generalize the classical Bakry-\'Emery criterion when the potential associated to the equilibrium measure is not strictly convex. It is based on the existence of a Lyapounov function associated to the infinitesimal generator of the diffusion.
\bigskip

The structure of the article is the following: in the next section, we introduce some notations and we recall the definition of the class $\mathcal C$ of relativistic diffusions considered in the sequel. In Section \ref{sec.results}, we state our results concerning the trends to equilibrium of the momentum components of the diffusions. The last section \ref{sec.proof} is devoted to the proof of our main result, namely the Poincar\'e inequality satisfied by the equilibrium measure. 

\section{The class $\mathcal C$ of relativistic diffusions} \label{sec.rappels}
Fix $d \geq 1$ an integer, and denote by $||\x|| = \sqrt{|x^1|^2 + \ldots + |x^d|^2}$ the Euclidian norm of a vector $\x \in \mathbb R^d$.
Let $\mathbb R^{1,d}$ denote the Minkowski space of special relativity. In its canonical basis, denote by $x = (x^{\m}) = (x^0, \: x^i) = (x^0, \:\x)$ the coordinates of the generic point, with greek indices running $0,..,d$ and latin indices running $1,..,d$.  The Minkowskian pseudo-metric is thus given by  
$$ds^2 = |dx^0|^2 - \sum_{i=1}^d\limits |dx^i|^2.$$

The world line of a particle with positive mass $m$ is a timelike path in $\mathbb R^{1,d}$, which we can always parametrize by its arc-length, or proper time $s$. So the moves of such particle are described by a path $s\mapsto (x^\mu_s)$ in $\mathbb R^{1,d}$, having momentum $p = (p_s)$ given by $ p= (p^{\m}) = (p^0,  p^i) = (p^0,  \p) $, where $d p^{\m}_s := m \,d x^{\m}_s/d s$, 
and satisfying the pseudo-norm relation 
$$|p^{0}|^2 - ||\p||^2 = m^2. $$
 We shall consider here future directed world lines of type $(t,\x_t)_{t \geq 0}$, and take $m=1$. Introducing the velocity $\mathbf v=(v^1, \ldots,  v^d)$ by setting $ v^{i} := d x^{i}/d t$, and working with the usual spherical coordinates $(r,\theta) \in \mathbb R_+ \times \mathbb S^{d-1}$,  $r := || \p ||$ and $\theta := \p/ r=: \left( \theta^1, \ldots,  \theta^d \right) $, we get at once: 
$$\left \lbrace \begin{array}{ll}
\ds{ p^0} & \ds{ = \frac{dt}{ds} = \sqrt{1+ r^2} =\left( 1- |\mathbf v|^2 \right)^{-1/2},} \\
\\
\ds{p} & \ds{ = \sqrt{1+ r^2} (1,  \mathbf v)}.
\end{array}
\right.$$

Thus, a full space-time trajectory $ (x_t, p_t)_{t \geq 0}= (t, \x_t, p^0_t, \p_t)_{t \geq 0} $
which takes values in the positive part of the unitary tangent bundle $T^1_+ \mathbb R^{1,d}$, is determined by the mere knowledge of its spacial components $(\x_t,\p_t)$.  We can therefore, from now on, focus on spacial trajectories $ t\mapsto (\x_t,\p_t)$ which take values in the Euclidian product $\mathbb R^d \times  \mathbb R^d $. 

\subsection{Definition of the class $\mathcal C$}

Let us first recall the definition of the class $\mathcal C$ introduced in \cite{angst}.
\begin{definition}
The relativistic diffusions of the class $\mathcal C$ are the processes of type $(x_t, p_t)_{t \geq 0}= (t, \x_t, p^0_t, \p_t)_{t \geq 0}$ in $T^1_+ \mathbb R^{1,d}$, where the associated spatial process $(\x_t,  \p_t)_{t \geq 0}$ is itself a diffusion, solution of a stochastic differential system of the form, for $1 \leq i \leq d$:
$$ (\star)\; \left \lbrace \begin{array}{lr}  d x_t^i  = f(r_t)  p^i_t  d t  \\ \vspace{-3mm} \\ 
d p_t^i = - b(r_t)  p^i_t  d t  + \sigma(r_t)\Big(\beta [1+\eta(r_t)^2]\Big)^{-1/2} [dW_t^i + \eta(r_t)\theta_t^i dw_t] & \end{array} \right., $$
where the real functions $f$, $b$, $\sigma$, $\eta$ are continuous on $\mathbb R_+$ and satisfy the following hypotheses,
for some fixed $\e >0$:   
$$({\bf{\mathcal H}}) \quad \left \lbrace
 \begin{array}{l}
\ds{\sigma \geq \e \;\hbox{on} \; \mathbb R_+ \; ;  \;\;  g(r):= \frac{2r b(r)}{\sigma^2(r)} \geq \e \; \hbox{ for large } r\; ;} \\
\ds{ \lim_{r\to\ii} \, e^{-\e' r} f(r) = 0\;  \hbox{ for some }  \e'<  \beta \e/2} .
 \end{array} \right.$$ 
\end{definition} 
In the definition above, $\mathbf W :=(W^1, \ldots , W^d)$ denotes a standard $d$-dimensional Euclidian Brownian motion, $w$ denotes a standard real Brownian motion, independent of $\mathbf W$, and $\beta >0$ is an inverse heat parameter.

\begin{example}
In the simplest case of constant functions $f$, $b$, $\s$, and $\eta=0$, the process $(\x_t)_{t \geq 0}$ is an integrated Ornstein-Uhlenbeck process. The process considered by Debbasch et al. in \cite{bdr3,bdr1,dmr,dr}, they call Relativistic Ornstein-Uhlenbeck Process (ROUP), corresponds to: 
$$ f(r) = b(r) = (1+r^2)^{-1/2} ,\; \; \s(r) = \sqrt{2} , \;\;\eta =0, $$
and the process considered by Dunkel and H\"anggi \cite{dunkel1,dunkel2} corresponds to: 
$$ \left \lbrace \begin{array}{l}
f(r) = (1+r^2)^{-1/2} ,\; \; b(r) = 1- {d  \beta^{-1} (1+r^2)^{-1/2}} , \\
\\
 \s(r) = \sqrt{2\sqrt{1+r^2}} ,  \;\; \eta(r) = r .
 \end{array} \right.$$ 
\end{example}

\subsection{Infinitesimal generator of the momentum diffusion}
If $(x_t, p_t)_{t \geq 0}= (t, \x_t, p^0_t, \p_t)_{t \geq 0}$ is a relativistic diffusions of the class $\mathcal C$, then the process $(\p_t)_{t \geq 0}$ is itself a diffusion process, we will call the \emph{momentum diffusion}. In spherical coordinates $\p=(r, \theta) \in \mathbb R_+ \times \mathbb S^{d-1}$, its infinitesimal generator is given by
\begin{equation} \label{eq.gener1}  \mathcal L_{\s^2} :=\, \mathcal L_r + \frac{\s^2(r)}{2\,\beta\,r^2}\: \Delta_{\mathbb S^{d-1}}, \end{equation}
where $\Delta_{\mathbb S^{d-1}}$ denotes the usual spherical Laplacian on $\mathbb S^{d-1}$ and $\mathcal L_r $ is the infinitesimal generator of the \emph{radial} process $r_t:=||\p_t||$:
$$\mathcal L_r :=\frac{\s^2(r)}{2\,\beta} \,\bigg( \pa_r^2 + \frac{d-1}{r} \partial_r  - \bigg[ \frac{d-1}{r}\times \frac{\eta^2(r)}{1+\eta(r)^2} + \beta\, g(r) \bigg] \partial_r \bigg) . $$
Let us use the same notations as in \cite{angst}, that is:
$$ \m(r):=\exp \left(   \int_1^r  \frac{d \rho}{\rho(1+\eta(\rho)^2)} d\rho\right), \quad G(r) := \int_0^r g(\rho)d \rho,$$
and introduce the functions $V : \mathbb R_+ \to \mathbb R$ and $U : \mathbb R^d \to \mathbb R$: 
$$V(r) := \int_1^r \frac{d-1}{\rho}\times \frac{\eta^2(\rho)}{1+\eta(\rho)^2} d\rho  + \beta G(r), \quad U(\p) := V(|| \p||).$$
Then, if $\Delta$ and $\nabla$ are the usual Laplacian and gradient in $\mathbb R^d$, the generator $\mathcal L_{\sigma^2}$ at $\p$ can be re-written under the familiar form: 
\begin{equation} \label{eq.gener2}  \mathcal L_{\s^2} := \frac{\s^2(||\p||)}{2 \beta} \times \mathcal L, \;\; \hbox{with} \;\; \mathcal L:= \Delta - \nabla U(\p). \nabla \;\;.\end{equation}
In the sequel, we denote by $\Gamma$ the operator ``carr\'e du champ'' associated to $\mathcal L$, that is for good functions $f$ and $g$ in the domain of $\mathcal L$:
\begin{equation}  \Gamma(f,g) : = \mathcal L(fg)-  \mathcal L(f)g -f\mathcal L(g).\label{eq.carre} \end{equation}

\section{Statement of the results} \label{sec.results}
We can now state our results concerning the trends to equilibrium of the momentum diffusion. 
In the following lemma, we explicit the invariant (or equilibrium) measure of the process $(\mathbf p_t)_{t \geq 0}$.
 
\begin{lemma}\label{lem.inv}
Let $(t, \mathbf x_t, p^0_t, \mathbf p_t)_{t \geq 0}$ a diffusion of the class $\mathcal C$. Then, the momentum subdiffusion $(\mathbf p_t)_{t \geq 0}$ with values in $\mathbb R^d$ is ergodic and its invariant probability measure is given by: 
\begin{equation}
\nu(\mathbf p) := \frac{1}{Z} \times \frac{e^{- U( \mathbf p) }}{\sigma^2(||\mathbf p||)} d \mathbf p,
\end{equation}
where $d \mathbf p$ denote the Lebesgue measure in $\mathbb R^d$ and $Z$ is a normalizing constant.
\end{lemma}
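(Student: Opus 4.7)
The proof splits into three tasks: showing $\nu$ has finite mass, showing it is invariant under the momentum semigroup, and deducing ergodicity.

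Integrability of $e^{-U}/\sigma^2$ follows directly from hypothesis $(\mathcal H)$: the bound $g(r)\geq \varepsilon$ for $r$ large forces $\beta G(r)$, hence $V(r)$, to grow at least linearly at infinity (the first piece of $V$ being non-negative), so $e^{-U(\mathbf p)}$ decays at least exponentially in $\|\mathbf p\|$. Combined with the lower bound $\sigma^2\geq \varepsilon^2$, this yields $Z = \int e^{-U}/\sigma^2 \, d\mathbf p<\infty$.

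For invariance, the key tool is the factorization $\mathcal L_{\sigma^2}=\frac{\sigma^2(\|\mathbf p\|)}{2\beta}\,\mathcal L$ from (\ref{eq.gener2}), where $\mathcal L=\Delta-\nabla U\cdot\nabla$ is the classical symmetric Fokker-Planck operator of potential $U$. A standard integration by parts argument shows that $\mathcal L$ is formally self-adjoint on $L^2(e^{-U}d\mathbf p)$, so in particular $\int \mathcal L f\cdot e^{-U}\,d\mathbf p=0$ for every smooth compactly supported $f$. The multiplicative factor $\sigma^2/(2\beta)$ in $\mathcal L_{\sigma^2}$ is then exactly compensated by the factor $1/\sigma^2$ in the density of $\nu$:
\[
\int \mathcal L_{\sigma^2}f\,d\nu \;=\; \frac{1}{Z}\int \frac{\sigma^2}{2\beta}\,\mathcal L f\cdot\frac{e^{-U}}{\sigma^2}\,d\mathbf p \;=\; \frac{1}{2\beta Z}\int \mathcal L f\cdot e^{-U}\,d\mathbf p \;=\; 0,
\]
and $\nu$ is invariant. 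Note that although $e^{-U}d\mathbf p$ is reversible for $\mathcal L$, the measure $\nu$ is in general only invariant, not reversible, for $\mathcal L_{\sigma^2}$: the time change by $\sigma^2$ preserves invariance but breaks symmetry.

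Ergodicity should follow from standard Meyn-Tweedie theory. Computing the diffusion matrix of $(\mathbf p_t)$ from $(\star)$ yields $a = \frac{\sigma^2}{\beta(1+\eta^2)}(I+\eta^2\theta\theta^T)$, whose eigenvalues $\sigma^2/\beta$ (in the radial direction) and $\sigma^2/(\beta(1+\eta^2))$ (with multiplicity $d-1$) are everywhere strictly positive since $\sigma\geq\varepsilon$. The generator is thus locally uniformly elliptic, the process admits a smooth positive transition density and is irreducible, while the radial drift provided by $g(r)\geq\varepsilon$ at infinity gives a Lyapunov function (e.g.\ $U$ itself outside a large ball) pulling the process back to compact sets. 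Positive Harris recurrence then identifies $\nu$ as the unique invariant probability measure. The delicate point is not the explicit formula for $\nu$, which drops out immediately from the factorization (\ref{eq.gener2}), but the verification of the drift/irreducibility conditions needed for ergodicity under the mild hypothesis $(\mathcal H)$.
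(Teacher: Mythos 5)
Your proof is correct and follows essentially the same route as the paper: invariance comes from the factorization $\mathcal L_{\sigma^2}=\frac{\sigma^2(\|\mathbf p\|)}{2\beta}\,\mathcal L$ together with the symmetry of $\mathcal L$ with respect to $e^{-U}d\mathbf p$ (so the $\sigma^2$ cancels against the density of $\nu$), and finiteness of $Z$ comes from the at-least-linear growth of $\beta G$ forced by $(\mathcal H)$. The only differences are minor: the paper additionally invokes the bounds $\min(1,r)\leq\mu(r)\leq\max(1,r)$ to control the density near the origin (where $e^{-V(r)}$ can blow up, at worst like $r^{-(d-1)}$, when $\eta$ does not vanish there --- a point your argument glosses over, though the singularity is always Lebesgue-integrable), and it disposes of ergodicity in one line from the finiteness of the invariant measure, whereas you spell out the ellipticity/irreducibility/Lyapunov argument in more detail.
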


\begin{remark}
If the Euclidian space $\mathbb R^d$ is endowed with the usual spherical coordinates $(r, \theta) \in \mathbb R_+ \times \mathbb S^{d-1}$, 
the equilibrium measure $\nu$ reads
\begin{equation}
 \nu(r, \theta) = \frac{1}{Z} \times \frac{e^{- V(r) }}{\sigma^2(r)} r^{d-1} d r d \theta = \frac{1}{Z} \times \mu(r)^{d-1} \frac{e^{- \beta G(r) }}{\sigma^2(r)} d r d \theta, 
\end{equation}
where $d\theta$ is the uniform measure on $\mathbb S^{d-1}$.
\end{remark}

\begin{example}
In the case of the ROUP and the diffusion of Dunkel and H\"anggi, the invariant measure is the J\"uttner distribution which is the equivalent of the classical Maxwell measure in the framework of special relativity: 
\begin{equation}
\nu(\mathbf p)= \frac{1}{Z} \times e^{- \beta \sqrt{1+|| \mathbf p ||^2}} d \mathbf p , \quad \emph{i.e.} \quad \nu(r, \theta) =  \frac{1}{Z}  e^{- \beta \sqrt{1+r^2}} r^{d-1} d r d \theta. 
\end{equation}
\end{example}

\begin{proof}
Fix a smooth, bounded test function $f : \mathbb R^d \to \mathbb R$. One easily check that 
$$\int \mathcal L_{\s^2} f (\mathbf p) \frac{e^{- U(\mathbf p) } }{\s^2(||\p||)}  d\p = \frac{1}{2\beta} \int \mathcal L f (\mathbf p) e^{- U( \mathbf p) } d \mathbf p  =0,$$ 
so that the measure $\nu$ is invariant for the process $(\mathbf p_t)_{t \geq 0}$. Under the hypotheses $(\mathcal H)$, one has 
$\min(1,r) \leq \mu(r) \leq \max(1,r)$, $\sigma(r)\geq \varepsilon$ for all $r \geq 0$ and $G(r) \geq \varepsilon r /2$ for $r$ sufficiently large so that 
$$\int \frac{e^{- U(\mathbf p) }}{\sigma^2(||\mathbf p||)} d \mathbf p  = \int \mu(r)^{d-1} \frac{e^{- \beta G(r) }}{\sigma^2(r)} d r d \theta <+\infty.$$
The measure $\nu$ being finite, the process $(\mathbf p_t)_{t \geq 0}$ is ergodic.
\end{proof}

The next theorem and corollary establish that the equilibrium measure $\nu$ satisfies a Poincar\'e inequality, so that the rate of convergence to equilibrium of the momentum process is exponential in $\mathbb L^2(\nu)-$norm. Here and in the sequel, $\nu f$ denote the integral of the function $f$ against $\nu$.

\begin{theorem}\label{the.main}
There exists a positive constant $c_2$ such that for all $f \in \mathbb L^2(\nu)$:
$$\textrm{var}_{\nu} (f):=\vert \vert f - \nu f\vert \vert_{\mathbb L^2(\nu)}^2 \leq  c_2 \int \Gamma(f,f) d\nu,$$
\end{theorem}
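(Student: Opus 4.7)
The plan is to apply the Lyapunov-function criterion of Bakry--Barthe--Cattiaux--Guillin \cite{bakry2}, which generalizes the classical Bakry--\'Emery criterion beyond the strictly convex case.

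The first step is to exhibit a symmetric reformulation. Since $\sigma \geq \e > 0$, the density of $\nu$ can be written as $Z^{-1}\,e^{-\tilde U(\p)}$, where $\tilde U(\p) := U(\p) + 2\log \sigma(||\p||)$. The operator $\tilde{\mathcal L} := \Delta - \nabla \tilde U \cdot \nabla$ is then self-adjoint in $\mathbb L^2(\nu)$, and since it shares its diffusion part with $\mathcal L$, its carr\'e du champ coincides (up to the usual factor of $2$) with the $\Gamma$ defined in \eqref{eq.carre}. In particular, the integration-by-parts formula $\int h\,\tilde{\mathcal L}w\,d\nu = -\tfrac12\int \Gamma(h,w)\,d\nu$ holds for smooth compactly supported $h,w$.

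Next I would invoke the criterion of \cite{bakry2}: it suffices to produce a $C^2$ function $W:\mathbb R^d\to[1,\infty)$ together with constants $\lambda>0$, $b\geq 0$, $R>0$ satisfying the Foster--Lyapunov drift condition
$$\tilde{\mathcal L}W(\p) \leq -\lambda\,W(\p) + b\,\mathbf 1_{B(0,R)}(\p),\qquad \p\in\mathbb R^d,$$
together with a local Poincar\'e inequality for $\nu$ restricted to $B(0,R)$. The short derivation consists of plugging $h := f^2/W$ in the integration-by-parts identity and applying Cauchy--Schwarz to the resulting right-hand side, which yields $\int f^2\,(-\tilde{\mathcal L}W/W)\,d\nu \leq \int|\nabla f|^2\,d\nu$. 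Combined with the drift bound this rearranges to
$$\lambda\int f^2\,d\nu \leq \int |\nabla f|^2\,d\nu + b\int_{B(0,R)} f^2\,d\nu,$$
which, applied to $f-c$ for a suitably chosen constant $c$ and combined with the local Poincar\'e inequality on the ball, produces the desired variance inequality. The local inequality itself is standard: on $B(0,R)$ the density of $\nu$ is continuous and bounded above and below by positive constants (using $\sigma\geq\e$), so the Neumann--Poincar\'e inequality for Lebesgue measure on the ball, combined with the Holley--Stroock perturbation lemma, provides it.

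The heart of the argument is therefore the construction of $W$. A natural candidate is the radial exponential $W(\p) := \exp(\alpha V(||\p||))$ for some fixed $\alpha\in(0,1)$, smoothed near the origin so as to be $C^2$ everywhere. Setting $r:=||\p||$, a direct computation in spherical coordinates gives
$$\frac{\tilde{\mathcal L}W(\p)}{W(\p)} = \alpha V''(r) - \alpha(1-\alpha)V'(r)^2 + \frac{\alpha(d-1)}{r}V'(r) - 2\alpha V'(r)(\log\sigma)'(r).$$
Hypothesis $({\mathcal H})$ ensures $V'(r) \geq \beta g(r) \geq \beta\e$ for all sufficiently large $r$, so the negative quadratic term $-\alpha(1-\alpha)V'(r)^2\leq-\alpha(1-\alpha)(\beta\e)^2$ will eventually dominate the lower-order contributions and force $\tilde{\mathcal L}W/W\leq-\lambda$ outside a large enough ball, yielding the drift condition.

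The step I expect to be the main obstacle is the control of the cross term $-2\alpha V'(r)(\log\sigma)'(r)$, since $({\mathcal H})$ bounds $\sigma$ only from below without any direct information on $\sigma'$. Overcoming this should be possible either by refining the Lyapunov function, e.g. taking $W := \sigma^{2}\,e^{\alpha V}$ so that the $(\log\sigma)'$ contribution is absorbed in the integration by parts, or by exploiting that $(\log\sigma)'$ appears multiplied by the large factor $V'$ so that the negative quadratic in $V'$ can swallow the perturbation provided $\sigma$ satisfies the mild regularity implicitly required by the SDE formulation. This is where the essential technical work of the proof will lie.
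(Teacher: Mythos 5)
Your overall architecture --- a Foster--Lyapunov drift condition combined with a local Poincar\'e inequality on a ball, in the spirit of \cite{bakry,bakry2} --- is exactly the paper's, and your treatment of the local inequality and of the integration-by-parts step (plugging $f^2/W$ into the Dirichlet form and completing the square) is sound. The gap is the one you yourself flag at the end, and it is not a removable technicality: it is fatal to the specific route you chose. By symmetrizing through the potential $\tilde U = U + 2\log\sigma$ you force $\nabla\log\sigma$ into both the operator and the drift computation, but under $(\mathcal H)$ the function $\sigma$ is only assumed continuous and bounded below by $\e$; it need not be differentiable, and even when it is, $(\log\sigma)'$ is completely uncontrolled, so the negative term $-\alpha(1-\alpha)V'(r)^2$ (whose size is only guaranteed to be of order $(\beta\e)^2$) cannot absorb it. Your candidate $W=e^{\alpha V}$ has the same defect: the term $\alpha V''$ requires $g$, hence $b$ and $\sigma$, to be $C^1$, which is not assumed, and $V''$ is likewise uncontrolled. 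Neither of your proposed repairs closes this: $W=\s^2 e^{\alpha V}$ introduces second derivatives of $\s$, and there is no ``mild regularity implicitly required by the SDE'' beyond continuity.

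The paper's resolution is to never differentiate $\sigma$ (or $V'$) at all. First, the Lyapunov function is the crude radial exponential $W(\p)=e^{c\|\p\|}$ (smoothed near the origin), and the drift condition is verified for the true generator $\mathcal L_{\s^2}=\frac{\s^2}{2\beta}\left(\Delta-\nabla U\cdot\nabla\right)$: since $\s^2$ enters only as a positive prefactor, one needs only $\s^2\geq\e^2$ together with $V'(r)\geq\beta\e$ for large $r$ (which follows from $g\geq\e$), and no derivative of $\s$ or of $V'$ ever appears. Second, in the key integral $\int \frac{\mathcal L_{\s^2}W}{W}\,f^2\,d\nu$, the prefactor $\s^2$ of the generator cancels against the factor $\s^{-2}$ in the density of $\nu$, leaving $\frac{1}{2\beta Z}\int\frac{\mathcal L W}{W}f^2e^{-U}d\lambda$; the integration by parts is then carried out for the $\sigma$-free operator $\mathcal L=\Delta-\nabla U\cdot\nabla$, which is symmetric with respect to $e^{-U}d\lambda$ with no smoothness of $\s$ required. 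This cancellation is precisely the ``slight change from the original proof'' of \cite{bakry} that the paper alludes to, and it is the missing idea in your proposal. (A side effect: the inequality obtained this way has $\int\s^2|\nabla g|^2\,d\nu$ on the right-hand side, i.e.\ the Dirichlet form of the actual generator $\mathcal L_{\s^2}$, which is exactly what the semigroup decay in Corollary \ref{cor.main} requires; the stronger flat-gradient Poincar\'e inequality your plan aims at is not needed.)
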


\begin{remark}\label{rem.L1}
Following the same method as in the proof of Theorem \ref{the.main} in Section \ref{sec.proof} below, it can be shown (see \cite{bakry}) that  the Poincar\'e inequality even holds in $\mathbb L^1(\nu)$, namely there exists a positive constant $c_1$ such that for all $g \in \mathbb L^1(\nu)$ with zero median, one has:
$$\vert \vert g \vert \vert_{\mathbb L^1(\nu)} \leq  c_1 \int |\nabla g | d\nu. $$
\end{remark}

From the above theorem, by the well known equivalence between Poincar\'e inequality and exponential decay of the $\mathbb L^2(\nu)-$distance to equilibrium, see for example Theorem 2.5.5 of \cite{logsob}, we deduce the corollary:

\begin{corollary}\label{cor.main}
Let $(t, \mathbf x_t, p^0_t, \mathbf p_t)_{t \geq 0}$ be a diffusion of the class $\mathcal C$ and let $P_t$  be the Markov semi-group associated to the momentum subdiffusion $(\mathbf p_t)_{t \geq 0}$. Then, there exists a positive constant $c_2$ such that for all $t \geq 0$, and for all $f \in \mathbb L^2(\nu)$:
$$\vert \vert P_t f - \nu f\vert \vert_{\mathbb L^2(\nu)}^2 \leq  e^{-t/c_2 }\vert \vert f - \nu f\vert \vert_{\mathbb L^2(\nu)}^2.$$
\end{corollary}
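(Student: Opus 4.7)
The plan is to invoke the Lyapunov function criterion of Bakry, Cattiaux and Guillin \cite{bakry}, which generalises the Bakry-\'Emery criterion and reduces the Poincar\'e inequality to the construction of a function satisfying an exponential drift condition for an operator under which $\nu$ is reversible.

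Using the definition \eqref{eq.carre}, a direct computation yields $\Gamma(f,f) = 2|\nabla f|^2$; up to the harmless factor $2$, the theorem is thus the standard Poincar\'e inequality for $\nu$ with respect to the Dirichlet form $\mathcal E(f,f) = \int |\nabla f|^2\,d\nu$ on $\mathbb L^2(\nu)$. Although $\nu$ is not reversible for $\mathcal L$ itself, it is the reversible probability of the symmetric operator $\widetilde{\mathcal L} := \Delta - \nabla \widetilde U \cdot \nabla$ with modified potential $\widetilde U := U + 2\log\s$, and $\widetilde{\mathcal L}$ shares with $\mathcal L$ the same carr\'e du champ. This is therefore the natural operator to plug into the criterion.

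By the Bakry-Cattiaux-Guillin criterion, it then suffices to exhibit a $C^2$ function $W:\mathbb R^d\to[1,\infty)$, constants $\lambda,b>0$ and a ball $B_R$ such that $\widetilde{\mathcal L}W \leq -\lambda W + b\,\mathbf 1_{B_R}$, together with a local Poincar\'e inequality for $\nu|_{B_R}$ --- the latter being immediate since the density $d\nu/d\p$ is smooth, strictly positive and uniformly bounded on any compact. For $W$ I would test the radial exponential $W(\p) := e^{\alpha\|\p\|}$ with $\alpha>0$ small: a direct computation produces
\begin{equation*}
\frac{\widetilde{\mathcal L}W}{W}(\p) \;=\; \alpha^{2} + \frac{\alpha(d-1)}{r} - \alpha\Bigl(V'(r) + \frac{2\s'(r)}{\s(r)}\Bigr),\qquad r = \|\p\|.
\end{equation*}
Hypothesis $(\mathcal H)$ gives $V'(r)\geq\beta g(r)\geq\beta\e$ for $r$ large, so choosing $\alpha\in(0,\beta\e/2)$ makes the above ratio uniformly bounded above by a strictly negative constant outside a sufficiently large ball, modulo the contribution of the term $2\s'/\s$. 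On the ball, continuity of the coefficients makes $\widetilde{\mathcal L}W$ bounded, the excess being absorbed into $b\,\mathbf 1_{B_R}$, and the drift inequality follows.

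The principal obstacle I foresee is controlling the term $2\s'(r)/\s(r)$ appearing in the effective drift of $\widetilde{\mathcal L}$, which is not covered by hypothesis $(\mathcal H)$ in general. One has to argue that it does not destroy the negativity produced by $-\alpha V'(r)$ at infinity, either by exploiting additional implicit regularity of $\s$ shared by all the examples of the class (ROUP, Dunkel-H\"anggi, etc.), or by refining the Lyapunov function (e.g.\ to $W(\p)=\s(\|\p\|)^{a}\,e^{\alpha\|\p\|}$ with a suitably tuned $a$) so as to cancel the parasitic contribution. Once the Lyapunov inequality is established, Corollary \ref{cor.main} follows from the standard equivalence between Poincar\'e inequality and exponential $\mathbb L^2(\nu)$-decay of the semi-group.
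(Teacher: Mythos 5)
Your global strategy --- construct a Lyapounov function in the spirit of Bakry--Cattiaux--Guillin, deduce the Poincar\'e inequality of Theorem \ref{the.main}, and then pass to the exponential $\mathbb L^2(\nu)$-decay by the standard equivalence --- is exactly the route the paper takes. The genuine gap lies in the operator on which you run the Lyapounov computation. To get an operator symmetric with respect to $\nu$ you replace $\mathcal L$ by $\widetilde{\mathcal L}=\Delta-\nabla(U+2\log\s)\cdot\nabla$, and this is what injects the term $2\s'(r)/\s(r)$ into the effective drift. Under $(\mathcal H)$ the function $\s$ is only assumed continuous and bounded below by $\e$: it need not be differentiable, and even when it is, nothing in $(\mathcal H)$ bounds $\s'/\s$, so the inequality $\widetilde{\mathcal L}W\le-\lambda W+b\,1_{B_R}$ cannot be established for the whole class $\mathcal C$. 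Neither of your proposed repairs closes this: demanding a controlled logarithmic derivative of $\s$ shrinks the class, and the ansatz $W=\s^{a}e^{\alpha\|\p\|}$ requires two derivatives of $\s$ and still leaves uncancelled terms in $\s'/\s$ and $\s''/\s$.

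The observation you are missing is that the true generator $\mathcal L_{\s^2}=\frac{\s^2}{2\beta}\,(\Delta-\nabla U\cdot\nabla)$ of the momentum diffusion is itself already symmetric with respect to $\nu$: the prefactor $\s^2$ cancels the $\s^{-2}$ in the density of $\nu$, so that $\int(\mathcal L_{\s^2}f)\,g\,d\nu=-\frac{1}{2\beta Z}\int\nabla f\cdot\nabla g\;e^{-U}d\lambda$, and the associated Dirichlet form dominates $\frac{\e^2}{2\beta}\int|\nabla f|^2\,d\nu$. Running the Lyapounov computation on $\mathcal L_{\s^2}$ with $W=e^{c\|\p\|}$ gives $\mathcal L_{\s^2}W=\frac{\s^2}{2\beta}\,c\,\bigl(c+\frac{d-1}{\|\p\|}-V'(\|\p\|)\bigr)W$: no derivative of $\s$ appears, $(\mathcal H)$ gives $V'\ge\beta\e$ at infinity, and the lower bound $\s^2\ge\e^2$ converts the negativity of the bracket into the drift condition of Lemma \ref{lem.lya}. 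Because $\s$ is not constant one cannot quote the additive-noise statement of \cite{bakry} verbatim; the paper instead redoes the integration by parts against $e^{-U}d\lambda$ with the cut-off functions $\chi_n$, which is exactly why a self-contained proof is given in Section \ref{sec.proof}. With that correction your argument becomes the paper's; the passage from the Poincar\'e inequality to the corollary is then the standard equivalence, as you say.
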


Moreover, as it is shown in \cite{guillin}, from the exponential decay of the $\mathbb L^2(\nu)-$ distance to equilibrium, one can derive the exponential decay of the distance to equilibrium in total variation. Namely, if $P^*_t \widetilde{\nu}$ denotes the law of $\p_t$ with initial distribution $\widetilde{\nu}$, we have:

\begin{corollary} \label{cor.deuz}
Suppose that the initial distribution of $(\mathbf p_t)_{t \geq 0}$ can be written $\widetilde{\nu}=h d\nu$,  with $h \in \mathbb L^2(\nu)$. Then, for all $t \geq 0$, the distance in total variation satifies 
$$ \vert \vert P^*_t \widetilde{\nu} - \nu \vert \vert_{TV} = \vert \vert P^*_t h - 1 \vert \vert_{\mathbb L^1(\nu)} \leq  e^{-t/2c_2 }\vert \vert h - 1 \vert \vert_{\mathbb L^2(\nu)}.$$
\end{corollary}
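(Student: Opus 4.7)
The plan is to reduce the total variation estimate to the $\mathbb{L}^2(\nu)$ decay of Corollary \ref{cor.main} by combining three ingredients: identification of the density of $P_t^*\widetilde{\nu}$ with respect to $\nu$, reversibility of the momentum diffusion so that the bound of Corollary \ref{cor.main} transfers to $P_t^*$, and the continuous embedding $\mathbb{L}^2(\nu) \hookrightarrow \mathbb{L}^1(\nu)$ coming from the fact that $\nu$ is a probability measure.

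First, I would verify that the momentum diffusion is reversible with respect to $\nu$. From the factorization \eqref{eq.gener2}, $\mathcal L_{\s^2} = \frac{\s^2}{2\beta}\mathcal L$ with $\mathcal L = \Delta - \nabla U \cdot \nabla$, and since $\nu$ has density proportional to $e^{-U}/\s^2$, integration by parts yields
$$\int (\mathcal L_{\s^2} f)\, g \, d\nu = -\frac{1}{2\beta Z}\int \nabla f \cdot \nabla g \, e^{-U} d\p = \int f\,(\mathcal L_{\s^2} g)\, d\nu,$$
so that $P_t$ is self-adjoint on $\mathbb{L}^2(\nu)$ and in particular $P_t^* = P_t$ on this space. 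Consequently, if $\widetilde{\nu} = h\, d\nu$ with $h \in \mathbb{L}^2(\nu)$, then $P_t^*\widetilde{\nu}$ admits the density $P_t h$ with respect to $\nu$. Since $\widetilde{\nu}$ is a probability measure we have $\nu h = 1$, so applying Corollary \ref{cor.main} to the function $h$ provides
$$\|P_t h - 1\|_{\mathbb{L}^2(\nu)} \leq e^{-t/2c_2}\,\|h - 1\|_{\mathbb{L}^2(\nu)}.$$

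To conclude, I would rewrite the total variation distance between the two probability measures $P_t^*\widetilde{\nu}$ and $\nu$ as the $\mathbb{L}^1(\nu)$-norm of the difference of their densities, namely $\|P_t h - 1\|_{\mathbb{L}^1(\nu)}$, and then invoke Cauchy-Schwarz for the probability measure $\nu$ to deduce $\|P_t h - 1\|_{\mathbb{L}^1(\nu)} \leq \|P_t h - 1\|_{\mathbb{L}^2(\nu)}$. Chaining with the previous estimate yields the stated bound. The only genuine subtlety, and thus the main obstacle, is the self-adjointness step: without it one would only control $\|P_t h - 1\|_{\mathbb{L}^2(\nu)}$ whereas the density evolving under $P_t^*\widetilde{\nu}$ is a priori $P_t^* h$. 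Fortunately, the explicit form of $\mathcal L_{\s^2}$ in \eqref{eq.gener2} makes the reversibility immediate, so that the argument is essentially routine once the $\mathbb{L}^2$ contraction of Corollary \ref{cor.main} is available; alternatively, one could appeal directly to the general implication ``$\mathbb{L}^2$ exponential decay $\Rightarrow$ total variation exponential decay'' established in \cite{guillin}.
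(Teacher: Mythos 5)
Your argument is correct and is precisely the standard reasoning (density identification via reversibility of $P_t$ with respect to $\nu$, the $\mathbb L^2(\nu)$ decay of Corollary \ref{cor.main} applied to $h$ with $\nu h=1$, then Cauchy--Schwarz to pass from $\mathbb L^2(\nu)$ to $\mathbb L^1(\nu)$) that the paper itself does not write out but delegates entirely to the reference \cite{guillin}. In effect you have supplied the proof the paper omits, and the self-adjointness step you flag as the only subtlety is indeed immediate from the symmetric form of $\mathcal L_{\s^2}$ against $e^{-U}d\lambda/\s^2$.
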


The proof of Theorem \ref{the.main} is given in the next section. As told in the introduction, the method we follow is an adaptation of the method developped in \cite{bakry,bakry2}. It consists in expliciting a Lyapounov function for the generator $\mathcal L_{\sigma^2}$ and then use this function to derive a Poincar\'e inequality for the equilibrium measure $\nu$. The slight change from the original proof comes from the fact that in their original paper \cite{bakry}, Bakry et al. consider a framework where there is only an additive noise, \emph{i.e.} $\sigma\equiv 1$ and $\eta \equiv 0$. For the sake of completeness,  a self-contained proof of the theorem in our context is given below.

\begin{remark}
The exponential decay in Corollary \ref{cor.main} is not trivial, even in the simplest examples of diffusions belonging to the class $\mathcal C$. 
For example, in the case of the ROUP, the process $(\mathbf p_t)_{t \geq 0}$ is solution of the stochastic differential equations system:
$$ d \p_t = - \frac{\p_t}{\sqrt{1+||\p_t||^2}} dt + \sqrt{2/\beta} \, d  \mathbf W_t.$$
When $||\p_t ||$ is large, the drift is only linear in $t$ so that the amount of time needed to return to zero may be very large compared to the case of the usual Ornstein-Uhlenbeck process where the drift is linear in $\p_t$. Equivalently, from an analytic point of view, the potential 
$$U(\p) =V(||\p||)=\sqrt{1+||\p||^2}$$
is not strictly convex, so that the classical methods such as the Bakry-\'Emery criterion do not apply. 
However, such a result is not surprising since the exponential decay in $\mathbb L^2-$norm is well known (see \cite{bobkov}) for the very similar one dimensional process $X_t$ solution of the stochastic differential equation 
$$d X_t = -\textrm{sign}(X_t) dt + d W_t,$$
where the associated potential $x \mapsto |x|$ is not stricly convex either. In the last decade, much progress has been made ​​to weaken the assumptions under which a measure in Euclidean space satisfies a Poincar\'e inequality. Among recent advances, the method developped in  \cite{bakry,bakry2} provides an elegant and efficient way to get strong results, for example it applies to general log$-$concave measures.
\end{remark}

\section{Proof of the results} \label{sec.proof}

\subsection{Existence of a Lyapounov function for the generator $\mathcal L_{\s^2}$}
 We now give the proof of the theorem \ref{the.main} stated in the previous section. 
We first explicit a Lyapounov function associated to the infinitesimal generator $\mathcal L_{\s^2}$ given by the equation (\ref{eq.gener2}). We denote by $B(0,R)$ the Euclidian ball centered at the origin and of radius $R$ in $\mathbb R^d$. 

\begin{lemma}\label{lem.lya}
There exists a smooth function $W : \mathbb  R^d \to \mathbb R$, and some constants $\alpha>0$, $\gamma \geq 0$, $R>0$ such that for all $\mathbf p \in \mathbb R^d$:
\begin{enumerate}
\item $ W(\mathbf p) \geq 1$ ; \par
\vspace{0.2cm}
\item $\left | \frac{\nabla W}{W}(\mathbf p) \right| $  is bounded ; \par 
\vspace{0.2cm}
\item $ \mathcal L_{\s^2} W(\mathbf p) \leq - \alpha \,  W(\mathbf p) + \gamma \, 1_{B(0,R)}(\mathbf p)$.
\end{enumerate}
\end{lemma}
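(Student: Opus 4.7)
The natural guess is to look for a radial Lyapounov function of pure exponential type, $W(\mathbf p)\propto e^{a\|\mathbf p\|}$; since $\|\mathbf p\|$ is not smooth at the origin, I would use the smoothed version
$$W(\mathbf p) := \exp\bigl(a\,\sqrt{1+\|\mathbf p\|^2}\bigr),$$
with $a>0$ a parameter to be fixed below. Items (1) and (2) of the lemma are then immediate for any $a\geq 0$: one has $W\geq e^a\geq 1$, and $\nabla W/W = a\,\mathbf p/\sqrt{1+\|\mathbf p\|^2}$, hence $|\nabla W|/W \leq a$ uniformly.

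The real work is Item (3). Writing $u(r):=\sqrt{1+r^2}$, one has $u'(r)=r/u$ and $u''(r)=1/u^3$ (this comes from $u^2-r^2=1$). Since $W$ is radial, the spherical part of $\mathcal L_{\sigma^2}$ annihilates it, so using the formula (\ref{eq.gener2}) together with $V'(r) = \frac{d-1}{r}\frac{\eta^2(r)}{1+\eta^2(r)} + \beta g(r)$, a direct substitution yields
$$\frac{\mathcal L_{\sigma^2} W}{W}(r) = \frac{\sigma^2(r)}{2\beta}\left[\frac{a}{u^3} + \frac{a^2 r^2}{u^2} + \frac{(d-1)a}{u(1+\eta^2(r))} - \frac{a\beta g(r)\,r}{u}\right].$$
As $r\to\infty$, the first and third terms in the bracket vanish, while $r^2/u^2,\,r/u\to 1$, so the bracket is asymptotic to $a^2-a\beta g(r)$. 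Invoking the hypothesis $g\geq\varepsilon$ for large $r$ and choosing $a:=\beta\varepsilon/2$ (any value in $(0,\beta\varepsilon)$ would do), the leading constant becomes $-\beta^2\varepsilon^2/4<0$, so there exists $R>0$ such that the bracket is $\leq -\beta^2\varepsilon^2/8$ on $\{\|\mathbf p\|\geq R\}$. Combining with $\sigma^2\geq \varepsilon^2$ yields $\mathcal L_{\sigma^2}W \leq -\alpha\, W$ on $\{\|\mathbf p\|\geq R\}$ with $\alpha:=\beta\varepsilon^4/16$.

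It then remains to absorb the compact region: on $B(0,R)$, continuity of $\sigma,g,\eta$ and smoothness of $W$ make $\mathcal L_{\sigma^2}W+\alpha W$ uniformly bounded above by some constant $\gamma<\infty$, so the inequality $\mathcal L_{\sigma^2} W\leq -\alpha W + \gamma\,\mathbf{1}_{B(0,R)}$ holds pointwise on $\mathbb R^d$, which is Item (3).

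The conceptual obstacle that this plan is designed to dodge is the following: one might try the more classical choice $W = e^{aV}$, but under the assumptions $(\mathcal H)$ the potential $V$ is only $C^1$ (it is the antiderivative of the merely continuous functions $g$ and $\eta^2/(1+\eta^2)$), so a second derivative of $V$ would appear in $\mathcal L_{\sigma^2}W$ and cannot be controlled. Building $W$ directly from the radial coordinate $\|\mathbf p\|$ avoids ever differentiating $g$ or $\eta$; the dissipation needed to dominate the positive quadratic term $a^2$ produced by exponentiating is then supplied entirely by the lower bound $g(r)\geq\varepsilon$, and the balance $a<\beta\varepsilon$ is the quantitative translation of this trade-off.
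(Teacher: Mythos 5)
Your proposal is correct and follows essentially the same route as the paper: an exponential Lyapounov function in the radial variable, with the dissipation coming from $V'(r)\geq\beta\varepsilon$ at infinity (a consequence of $g\geq\varepsilon$ in $(\mathcal H)$) and the rate $a$ chosen strictly below $\beta\varepsilon$, the compact part being absorbed by $\gamma\,1_{B(0,R)}$. The only difference is cosmetic — you take the globally smooth $e^{a\sqrt{1+\|\mathbf p\|^2}}$ where the paper patches $e^{c\|\mathbf p\|}$ near the origin — and your computation and choice of constants check out.
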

\begin{proof}
Consider a smooth function $W$ of the form $e^{c ||\mathbf p ||}$ for $||\mathbf p ||\geq R$ and such that $W(\mathbf p) \geq 1$ for all $\mathbf p \in \mathbb R^d$, where the two constants $c>0$ and $R>0$ will be fixed later. For $||\mathbf p ||\geq R$, one has
$$\mathcal L_{\s^2} W(\mathbf p) =\frac{\s^2(||\p||)}{2\beta} \times c \left( \frac{d-1}{||\mathbf p||} + c - \nabla U(\mathbf p). \frac{\p}{||\p||} \right) W(\mathbf p).$$
In our case, we have $\nabla U(\mathbf p) = \nabla V(||\mathbf p||) = V'(||\p||) .\p/|| \p||$ so that 
$$\mathcal L_{\s^2} W(\mathbf p) =\frac{\s^2(||\p||)}{2\beta} \times c \left( \frac{d-1}{||\mathbf p||} + c - V'(||\p||)\right) W(\mathbf p).$$
Under the hypotheses $(\mathcal H)$, $\s^2(||\p||) \geq \e^2$ for all $\p$, and for large enough $||\p||$ we have: 
$$V'(||\p||) =  \frac{d-1}{||\p||}\times \frac{\eta^2(||\p||)}{1+\eta(||\p||)^2}  + \beta g(||\p||) \geq \beta \e .$$
Thus, taking $c$ sufficiently small and $R$ large enough so that $\frac{d-1}{R}+c \leq \frac{\beta \e}{2}$, we get for $||\p||\geq R$:
$$\mathcal L_{\s^2} W(\mathbf p)  \leq -\alpha  W(\mathbf p), \;\; \hbox{where} \;\; \alpha= c \times \frac{\e^3}{4}.$$
Finally,  for some non-negative constant $\gamma$, we have for all $\p \in \mathbb R^d$:
$$\mathcal  L_{\s^2} W(\mathbf p)  \leq -\alpha  W(\mathbf p)+ \gamma \, 1_{B(0,R)}(\mathbf p).$$
Moreover, by construction $\left | \frac{\nabla W}{W}(\mathbf p) \right| $ is bounded. 
\end{proof}
\begin{remark}\label{rem.mono}
Since the Lyapounov function $W$ satisfies $W(\p)\geq 1$ for all $\p \in \mathbb R^d$, the point 3. in Lemma \ref{lem.lya} can be re-written as:
\begin{equation} 1 \leq - \frac{1}{\alpha} \frac{\mathcal  L_{\s^2}(W)}{W} + \frac{\gamma}{\alpha} \frac{1}{W}1_{B(0,R)}.\label{inq}\end{equation}
Therefore, since $W(|| \p ||)$ goes to infinity with $|| \p || $, the two ratios  $-\frac{\mathcal L_{\s^2}(W)}{W}$ and $-\frac{\mathcal L(W)}{W}$
are positive for $|| \p || $ large enough.
\end{remark}

\subsection{Proof of the Poincar\'e inequality}
We now give the proof of the Poincar\'e inequality stated in Theorem \ref{the.main}. In the sequel $d \lambda$ denote the Lebesgue measure in $\mathbb R^d$.
\begin{proof}
If $g$ is a smooth function in $\mathbb L^2(\nu)$, we have $\textrm{var}_{\nu}(g) \leq \int (g-c)^2 d \nu$ for all real constants $c$. Let $c$ be such a constant and define $f:=g-c$. Using the inequality (\ref{inq}), we have
\begin{equation} \label{ineq.var}
\int f^2 d \nu \leq - \frac{1}{\alpha} \underbrace{\int \frac{\mathcal  L_{\s^2}(W)}{W} f^2 d \nu}_{A} + \frac{\gamma}{\alpha} \underbrace{\int \frac{f^2}{W} 1_{B(0,R)} d \nu}_B.
\end{equation}
The first term A could be infinite depending on the behavior of $\mathcal L_{\s^2}(W)/W$ at infinity. Since we do not impose any integrability condition on this ratio, we have to restrict ourself in the calculation below to the case where $f=(g-c) \chi$, where $\chi$ is a smooth, non-negative, compactly supported function such that $1_{B(0,R)} \leq \chi \leq 1$. The general case is then obtained by taking a sequence of functions $\chi_n$ such that $1_{B(0,nR)} \leq \chi_n \leq 1$, $\vert \nabla \chi_n \vert \leq 1$, and go to the limit, which is allowed thanks to the mononicity noticed at the end of Remark \ref{rem.mono}.
Since the generator $\mathcal L$ is symmetric with respect to $e^{-U}d \lambda$, we have
$$
\begin{array}{ll}
A & \ds{= \int \frac{\mathcal  L_{\s^2}(W)}{W} f^2 d \nu =  \frac{1}{2\beta Z}  \int \frac{\mathcal  L(W)}{W} f^2 e^{-U}d \lambda} \\
&  = \ds{ - \frac{1}{2\beta Z}  \int \nabla \left(\frac{ f^2}{W} \right)\nabla W e^{-U}d \lambda } \\
& =\ds{-\frac{1}{2\beta Z}  \left(2 \int f \nabla f \frac{ \nabla W}{W}  e^{-U}d \lambda - \int f^2 \left \vert \frac{\nabla W}{W} \right \vert^2 e^{-U}d \lambda\right) } \\
& = \ds{-\frac{1}{2\beta Z} \left( \int  \vert \nabla f \vert^2 e^{-U}d \lambda - \int \left \vert \nabla f - f\frac{\nabla W}{W} \right \vert^2 e^{-U}d \lambda \right)},
\end{array}
$$
and therefore 
\begin{equation} \label{ineq.A}
A \geq - \frac{1}{2\beta Z}  \int  \vert \nabla f \vert^2 e^{-U}d \lambda.
\end{equation}
Let us consider now the second term $B$ in the right hand side of (\ref{ineq.var}). It is well know that the measure $\nu$ satisfies a Poincar\'e inequality in the ball $B(0,R)$, \emph{i.e.} for a positive constant $\kappa_R$, we have: 
$$\int_{B(0,R)} f^2 d \nu \leq  \kappa_R \int_{B(0,R)} | \nabla f |^2 d \nu + \frac{1}{\nu(B(0,R))} \left(\int_{B(0,R)} f d \nu\right)^2.$$
We choose $c=\nu(B(0,R))^{-1} \times \int_{B(0,R)}g d \nu$, so that the last term vanishes ; thus using the fact that $|W| \geq 1$ and $\s^2 \geq \e^2$, we have 
\begin{equation} \label{ineq.B}
B \leq \kappa_R \int_{B(0,R)} | \nabla f |^2 d \nu \leq \frac{\kappa_R}{ \e^2 Z}  \int_{B(0,R)} | \nabla f |^2 e^{-U}d \lambda.
\end{equation}
Putting (\ref{ineq.A}) and (\ref{ineq.B}) together, we conclude that 
$$ \begin{array}{ll}
\textrm{var}_{\nu}(g) & \ds{\leq \int f^2 d \nu \leq \left( \frac{1}{2\alpha \beta Z} + \frac{\gamma \kappa_R}{\alpha \e^2 Z} \right) \int|\nabla  f|^2 e^{-U}d \lambda} \\
& =  \ds{\left( \frac{1}{2\alpha \beta Z} + \frac{\gamma  \kappa_R}{\alpha \e^2 Z} \right) \int|\nabla  g|^2 e^{-U}d \lambda} \\
& = \ds{2\beta Z \left( \frac{1}{2\alpha \beta Z} + \frac{\gamma  \kappa_R}{\alpha \e^2 Z} \right) \int \frac{\s^2}{2 \beta} |\nabla  g|^2 d \nu} \\
& = \ds{ 2\beta Z \left( \frac{1}{2\alpha \beta Z} + \frac{\gamma  \kappa_R}{\alpha \e^2 Z} \right) \int \Gamma(g,g) d \nu}
\end{array}
 $$
In other words, we have shown the desired Poincar\'e inequality with the (very non optimal) Poincar\'e constant 
$$c_2:=  \frac{1}{\alpha }  \left( 1+ \frac{2 \beta \gamma  \kappa_R}{ \e^2 } \right).$$

\end{proof}

\bigskip
\noindent 
{\bf Acknowledgements}:
The author wishes to thank S. Calogero for the stimulating discussion during the meeting  ANR ProbaGeo in Strasbourg, and F. Malrieu for having informed him of the Lyapounov method.

\bibliographystyle{spmpsci}      
\bibliography{references}   

\end{document}